\numberwithin{equation}{section}
\newtheorem{thm}[equation]{Theorem}
\newtheorem{prop}[equation]{Proposition}
\newtheorem{cor}[equation]{Corollary}
\newtheorem{lem}[equation]{Lemma}
\theoremstyle{definition}
\newtheorem{defn}[equation]{Definition}
\newtheorem{defns}[equation]{Definitions}
\newtheorem{notation}[equation]{Notation}
\newtheorem{eg}[equation]{Example}
\newtheorem{rem}[equation]{Remark}
\newtheorem{rems}[equation]{Remarks}
\newtheorem*{ack}{Acknowledgments}
\def\subsection{\def\@secnumfont{\bfseries}%
\@startsection{subsection}{2}%
  {\z@ {\normalfont\bfseries\S}}{.5\linespacing\@plus.7\linespacing}{-.5em}%
  {\normalfont\bfseries}}
\newcommand{\pref}[1]{(\ref{#1})}
\newcommand{\fullcref}[2]{\cref{#1}\pref{#1-#2}}
\renewcommand{\Cref}{\cref}
\newcommand{\AND}{\mathbin{\mathrm{and}}}
\newcommand{\OR}{\mathbin{\mathrm{or}}}
\newcommand{\rel}{\mathrel{R}}
\newcommand{\rels}{\mathfrak{R}}
\newcommand{\relH}{r}
\newcommand{\closure}{\overline}
\DeclareMathOperator{\LO}{LO}
\newcommand{\N}{\mathbb{N}}
\newcommand{\Z}{\mathbb{Z}}
\newcommand{\R}{\mathbb{R}}
\newcommand{\noprelistbreak}{\smallskip\@nobreaktrue\nopagebreak} 
\begin{document}

\title[Amenable groups with a locally invariant order]
{Amenable groups with a locally invariant order are locally indicable}

\author{Peter Linnell}

\address
{Department of Mathematics, Virginia Tech,
Blacksburg, Virginia \text{24061--0123}, USA}
\email
{plinnell@math.vt.edu,
\href{http://www.math.vt.edu/people/plinnell/}{http://www.math.vt.edu/people/plinnell/}}
\thanks{The first author was partially supported by a grant from the
NSA}

\author{Dave Witte Morris}

\address
{Department of Mathematics and Computer Science,
University of Lethbridge, Lethbridge, Alberta, T1K~3M4, Canada}
\email{%\href{mailto:Dave.Morris@uleth.ca} % ???
{Dave.Morris@uleth.ca}, 
\href{http://people.uleth.ca/~dave.morris/}{http://people.uleth.ca/\!$\sim$dave.morris/}}

\begin{abstract}
We show that every amenable group with a locally invariant partial order has a left-invariant total order (and is therefore locally indicable).
We also show that if a group~$G$ admits a left-invariant total order, and $H$ is a locally nilpotent subgroup of~$G$, then a left-invariant total order on~$G$ can be chosen so that its restriction to~$H$ is both left-invariant and right-invariant.
Both results follow from recurrence properties of the action of~$G$ on its binary relations.
\end{abstract}

\subjclass{20F60, 06F15, 37A05, 43A07}

\keywords{locally invariant order, left-invariant order, left-orderable group, right-orderable group, recurrent order, locally indicable, amenable group}

\date{\today}

\maketitle

\section{Introduction} \label{IntroSect}

The purpose of this note is to point out two easy consequences of the proof that finitely generated, amenable, left-orderable groups have nontrivial first Betti number \cite{Morris-AmenOnLine}. 
(See \cref{DefnsSubsect} for the relevant definitions.)

Any left-invariant total order is a locally invariant order, so it is obvious that every left-orderable group has a locally invariant order. There is no known counterexample to the converse \cite[p.~1163]{Chiswell-LIO}, and we show that the converse is indeed true for amenable groups. (In particular, the converse is true for all virtually solvable groups. This does not seem to be trivial even for groups that are virtually abelian.)

\begin{thm} \label{AMENLIO}
Every amenable group with a locally invariant order is left-orderable. Therefore, the group is locally indicable.
\end{thm}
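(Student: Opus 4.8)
The plan is to recast ``left-orderable'' as the existence of a single $G$-fixed point for a natural action on a compact space, and then to feed that goal the invariant measure and the Poincar\'e recurrence that amenability supplies. First I would dispose of the ``therefore'': once $G$ is known to be left-orderable, local indicability is immediate from \cite{Morris-AmenOnLine}, because every nontrivial finitely generated subgroup $H\le G$ inherits amenability and left-orderability (the latter by restricting a left-invariant order), hence --- being finitely generated and nontrivial --- has $\Z$ as a quotient, which is exactly local indicability. So the task reduces to showing $G$ left-orderable, and since left-orderability is inherited from finitely generated subgroups I may as well assume $G$ finitely generated, in particular countable.

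Next I would set up the dynamics. Let $X=2^{G\times G}$ be the compact metrizable space of binary relations on~$G$, with the continuous $G$-action $g\cdot R=\{(gx,gy):(x,y)\in R\}$. Being a partial order (reflexive, antisymmetric, transitive) and being locally invariant are both closed conditions on~$X$ --- each is cut out coordinatewise --- and both are preserved by the relabelling $x\mapsto gx$, so the set $\mathcal P\subseteq X$ of locally invariant partial orders on~$G$ is a nonempty (it contains the given order) compact $G$-invariant subspace. The elementary observation that powers the strategy is: \emph{a $G$-fixed point of~$\mathcal P$ is automatically a total order}, and hence is a left-ordering of~$G$. Indeed, for a left-invariant partial order the positive cone $P=\{g:1<g\}$ is a subsemigroup with $1\notin P$ and $P\cap P^{-1}=\varnothing$ (by transitivity and antisymmetry), while local invariance read off at the identity says that for each $g\neq1$ at least one of $g,g^{-1}$ lies in~$P$; thus $G=P\sqcup\{1\}\sqcup P^{-1}$. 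So it suffices to produce a $G$-fixed point in~$\mathcal P$ --- or, should that prove too rigid a demand, to extract a left-invariant total order from the $G$-action on~$\mathcal P$ by a limiting procedure.

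Finally I would invoke amenability: it furnishes a $G$-invariant Borel probability measure~$\mu$ on~$\mathcal P$, and the Poincar\'e recurrence theorem then yields a \emph{recurrent} locally invariant partial order $R\in\mathcal P$ --- one whose $G$-orbit revisits every clopen neighbourhood of~$R$, in fact along a density-one subset of any prescribed F{\o}lner sequence. Converting this recurrent~$R$ into a left-invariant order is the hard part, and where I expect essentially all the content to lie (and where, presumably, the recurrence arguments of \cite{Morris-AmenOnLine} are reused): amenability by itself gives only the invariant measure, never a fixed point in~$\mathcal P$, so one must exploit that recurrence forces~$R$ to agree, on every finite window, with its $G$-translates about a positive density of base points, and combine that self-similarity with the local invariance of~$R$ to make the ``local positive cones'' $\{h:(a,ah)\in R\}$ cohere into a single global positive cone --- equivalently, in the language of \cite{Morris-AmenOnLine}, to show that the convex subgroups attached to~$R$ form a chain whose successive quotients carry invariant orders from which a left-invariant order on~$G$ can be reassembled. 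Making that coherence precise and ruling out the degenerate configurations is the main obstacle; the rest of the argument is soft.
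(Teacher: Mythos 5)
Your reduction to the finitely generated (hence countable) case, the compactness framework on $2^{G\times G}$, the observation that a left-invariant locally invariant partial order has positive cone $P$ with $G=P\sqcup\{e\}\sqcup P^{-1}$ and $P$ a subsemigroup, and the deduction of local indicability from \cite{Morris-AmenOnLine} are all correct and in the spirit of the paper. But the proof is not complete: the step you yourself label ``the hard part'' --- converting a recurrent locally invariant order into a left-invariant total order --- is where the entire content of the theorem sits, and you only gesture at it (positive-density agreement of local cones, chains of convex subgroups) without an argument. In the paper this step is \cref{DescribeRecurrent}, and it is short once one has recurrence for \emph{right} translations: if $\prec$ is locally invariant and recurrent for right translation by every element, then (i) each left coset $g\langle x\rangle$ is ordered monotonically --- otherwise one may arrange $g \prec gx \prec gx^2 \prec \cdots$ while $gx^{-1} \prec gx^{-2} \prec \cdots$, and recurrence applied to $gx^{-1} \prec gx^{-2}$ under right translation by $x$ yields some $k$ with $gx^{k+1} \prec gx^{k}$, a contradiction; and (ii) the cone $P=\{x : x\succ e\}$ is closed under multiplication --- if $x,y\succ e$ but $xy \prec e$, then by (i) $x \succ xy \succ xy^2 \succ \cdots$, contradicting that recurrence gives $xy^n \succ ey^n \succ e$ for some $n$. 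Together with local invariance read off at $e$ (for $x\neq e$, either $x\in P$ or $x^{-1}\in P$), $P$ is the positive cone of a left-invariant total order. No convex-subgroup or F{\o}lner-density analysis in the style of \cite{Morris-AmenOnLine} is needed at this stage; the heavier route you sketch is not carried out and is not obviously available, since the recurrent relation is only a partial order a priori.

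There is also a structural flaw in your dynamical setup: you let $G$ act on $2^{G\times G}$ by left translations only, so Poincar\'e recurrence would hand you recurrence of $R$ under \emph{left} translation, which is not what the argument above uses, and which interacts poorly with local invariance (a condition about right multiplication, comparing $x$ with $xy^{\pm 1}$). The paper instead uses the $G\times G$ action $x \mathrel{R}^{(g,h)} y \iff gxh^{-1} \mathrel{R} gyh^{-1}$, under which the set of locally invariant orders is invariant, together with amenability of $G\times G$ (\cref{AmenFacts}), to produce via Poincar\'e recurrence a locally invariant order recurrent for all right translations (\cref{RestrictAmenIsRecurLIO} with $H=G$); that is precisely what makes \cref{DescribeRecurrent} applicable. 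So to repair your outline: replace the left-translation action by the $G\times G$ (or at least the right-translation) action, and replace the sketched ``coherence of local cones'' step by the two short recurrence arguments above.
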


We also prove a new result on extending an ordering of a subgroup to an ordering of the ambient group:

\begin{thm} \label{RESTONILPISBI}
If
\noprelistbreak
	\begin{itemize} \itemsep=\smallskipamount
	\item $G$ is a left-orderable group,
	and
	\item $H$ is a locally nilpotent subgroup of~$G$,
	\end{itemize}
then there is a left-invariant total order on~$G$, such that the restriction of the order to~$H$ is bi-invariant.
\end{thm}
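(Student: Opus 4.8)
The plan is to obtain the required order as a \emph{recurrent} point of the space $\LO(G)$ of left-invariant total orders on~$G$, viewed as a closed, hence compact, $G$-invariant subspace of~$2^{G\times G}$ on which $G$ acts by conjugating positive cones. The starting observation is that for $R\in\LO(G)$ the restriction $R|_H$ is automatically left-invariant, and it is right-invariant — hence bi-invariant — exactly when the positive cone of~$R$ meets~$H$ in an $H$-conjugation-invariant set; equivalently, when $R^h$ and~$R$ induce the same order on~$H$ for every $h\in H$. I would first reduce to the case that $H$ is finitely generated, and therefore nilpotent and torsion-free: for a subgroup $H_0\le H$ the set $\{R\in\LO(G):R|_{H_0}\text{ is bi-invariant}\}$ is closed in~$\LO(G)$, and these sets have the finite-intersection property over the finitely generated $H_0\le H$ (since $\langle H_0,H_1\rangle$ is again a finitely generated, hence nilpotent, subgroup), so compactness of~$\LO(G)$ promotes the finitely generated case to the general one.

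With $H$ finitely generated it is in particular countable and amenable (locally nilpotent groups are amenable), so there is an $H$-invariant Borel probability measure $\mu$ on the compact space $\LO(G)$. Applying the Poincar\'e recurrence theorem to the $\mu$-preserving transformation ``conjugate by~$h$'' for each $h\in H$, and intersecting the resulting full-measure sets over the countable group~$H$, I get a left-invariant total order~$R$ on~$G$ that is recurrent along every cyclic subgroup of~$H$: for every $h\in H$ and every finite $F\subseteq H$ there are infinitely many $n\in\Z$ for which $R^{h^n}$ and~$R$ induce the same order on~$F$. This is the recurrence input alluded to in the abstract, and it is the mechanism that also yields \cref{AMENLIO}.

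It remains to check that $R|_H$ is bi-invariant. Suppose not; by the reformulation above there are $z,h\in H$ with $z\succ_R1$ but $h^{-1}zh\prec_R1$. Put $M:=\langle z,h\rangle$, a finitely generated torsion-free nilpotent group, and $z_n:=h^{-n}zh^{n}\in M$, so $z_0\succ_R1$ and $z_1\prec_R1$. Conjugation by a power of~$h$ shifts the index of the family $(z_n)$, so applying the recurrence of~$R$ along $\langle h\rangle$ to the finite set $F=\{1,z,z^{-1},z_1,z_1^{-1}\}$ produces infinitely many~$n$ with $z_n\succ_R1$ and $z_{n+1}\prec_R1$; in other words the sign sequence $n\mapsto\mathrm{sgn}(z_n)$ changes sign infinitely often. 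But $n\mapsto z_n$ is a polynomial sequence in~$M$ (conjugation in a nilpotent group is given by polynomials in Mal'cev coordinates), and I claim that such a sequence in a left-ordered finitely generated nilpotent group has eventually constant sign both as $n\to+\infty$ and as $n\to-\infty$. This contradiction finishes the proof.

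The whole weight of the argument rests on that last claim, and I expect it to be the only genuinely non-formal step. The route I would take exploits the structure of the left-order $R|_M$: a finitely generated nilpotent group has finite Hirsch length, so $R|_M$ has only finitely many convex subgroups, forming a chain $1=C_0\subsetneq\cdots\subsetneq C_k=M$ in which each $C_{i-1}$ is normal in~$C_i$ with $C_i/C_{i-1}$ torsion-free, abelian and archimedean, hence order-embeddable in~$\R$. The sign of an element is decided in the jump to which it belongs, so I would show that the level of~$z_n$ in this chain is eventually constant at each end, and that inside the ambient quotient $C_i/C_{i-1}\hookrightarrow\R$ the image of~$z_n$ is an honest polynomial in~$n$, hence eventually of constant sign. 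The delicate feature is that the convex subgroups~$C_i$ need not be normal in~$M$ — already the standard non-bi-invariant orders on the integer Heisenberg group have non-normal convex subgroups — so one cannot simply pass to the quotient modulo $C_{i-1}$; the polynomiality of conjugation has to be used at the level of cosets of~$C_{i-1}$ inside~$C_i$.
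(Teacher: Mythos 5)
Your overall skeleton coincides with the paper's: reduce to finitely generated (hence countable, amenable) $H$ by noting that the sets $B_G(K)=\{\,{\prec}\in\LO(G):\ \prec|_K \text{ bi-invariant}\,\}$ are closed and have the finite intersection property; then use amenability of~$H$, an $H$-invariant probability measure, and Poincar\'e recurrence to produce a left-invariant total order on~$G$ whose restriction to~$H$ is recurrent (this is \cref{RestrictAmenIsRecur}); then argue that recurrence plus nilpotence forces bi-invariance. The divergence, and the problem, is in the last step. You reduce it to the claim that the sign of the ``polynomial'' sequence $n\mapsto z^{h^n}$ (and more generally of any polynomial sequence) in a left-ordered finitely generated nilpotent group is eventually constant, and you leave that claim unproved: you explicitly flag the non-normality of the convex subgroups as an unresolved difficulty. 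Moreover, your sketch already assumes that the convex subgroups of $R|_M$ form a chain of Conradian jumps ($C_{i-1}\triangleleft C_i$ with $C_i/C_{i-1}$ archimedean, hence abelian); for an arbitrary left-order this is not a formal consequence of finite Hirsch length but is precisely the statement that every left-order on a nilpotent group is Conradian, itself a nontrivial theorem that you neither cite nor prove. So as written the proposal has a genuine gap exactly at the step you identify as carrying ``the whole weight of the argument''; filling it along your route would additionally require H\"older's theorem and the Mal'cev-coordinate/polynomial-sequence formalism (including the fact that a polynomial sequence taking values in a subgroup is polynomial relative to that subgroup), i.e.\ substantially more machinery than the problem needs.

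The paper's \cref{NilpRecur->Bi} shows that none of this structure theory is necessary once recurrence is in hand, and you may want to compare: if $x\succ e$ but $x^h\prec e$, then $[x,h]\prec e$ by left-invariance; choosing a central series of the (finitely generated, nilpotent) group and inducting on the level containing~$x$, the commutator $[x,h]$ lies one level down, so by the inductive hypothesis all its conjugates satisfy $[x,h]^{h^i}\prec e$; the telescoping identity
\begin{equation*}
x^{h^n} \;=\; x^h\,[x,h]^h\,[x,h]^{h^2}\cdots[x,h]^{h^{n-1}}
\end{equation*}
then gives $x^{h^n}\prec e$ for every $n\in\Z^+$, since the negative cone of a left-invariant order is closed under multiplication. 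This contradicts recurrence, which guarantees some $n$ with $x^{h^n}\succ e$. In other words, recurrence lets you bypass entirely the question of eventual sign-constancy for arbitrary left-orders (which is the hard, and in your write-up unestablished, part); you only ever need to rule out the single uniform pattern ``$x\succ e$ and $x^{h^n}\prec e$ for all $n\ge 1$'', and the commutator identity plus induction on a central series does exactly that.
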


\begin{rem}
The subgroup~$H$ is not assumed to be convex, or normal (or anything else, other than locally nilpotent), so it is difficult to imagine how \cref{RESTONILPISBI} could be attacked by the classical methods of the theory of orderable groups. However, we will see that it (and also \cref{AMENLIO}) can be proved very easily by using the action of~$G$ on the space of its left-invariant orders, an idea that was recently introduced into the subject by \'E.\,Ghys and A.\,S.\,Sikora.
See \cite{Navas-DynamicsOfLO} for more discussion and applications of this method.
\end{rem}

Here is an outline of the paper. 
	\cref{PrelimSect} provides some standard definitions and discusses the topology on the space of binary relations. 
	\cref{RecurSect} explains the use of amenability to obtain recurrence in the space of binary relations.
	\cref{AMENLIOPfSect} proves \cref{AMENLIO}.
	\cref{RESTONILPISBIPfSect} proves \cref{RESTONILPISBI}.
	Finally, \cref{DiffuseSect} shows that groups with a locally invariant order can also be characterized as the groups that are ``diffuse'' or ``weakly diffuse'' in the sense of B.\,Bowditch \cite{Bowditch-VarUPP}.

\begin{ack}
We thank A.\,Navas, A.\,Rhemtulla, and the other participants in the workshop on ``Ordered Groups and Topology'' (Banff International Research Station, Alberta, Canada, February 12--17, 2012) for many helpful conversations, for providing the impetus for this research, and for pointing out that the word ``locally'' could be inserted into the statement of \cref{RESTONILPISBI}. \fullcref{Generalize}{NotForResid} was provided by D.\,Rolfsen (an organizer of the workshop). 
We also thank the BIRS staff for the warm hospitality that provided such a stimulating research environment, and A.\,M.\,W.\,Glass for helpful comments on a previous version of this paper.
\end{ack}

\section{Preliminaries} \label{PrelimSect}

\subsection{Some standard definitions} \label{DefnsSubsect}

\begin{defns}[{\cite{KopytovMedvedev-ROGrps}}]
Let $G$ be a group.
\noprelistbreak
	\begin{itemize} \itemsep=\smallskipamount
	\item A \emph{partial order} on~$G$ is a transitive, irreflexive binary relation $\prec$ on~$G$. That is, $x \not\prec x$, and, for all $x, y, z \in G$, if $x \prec y$ and $y \prec z$, then $x \prec z$.
	\item A \emph{total} (or ``linear'') order on~$G$ is a partial order~$\prec$, such that, for all $x, y \in G$ with $x \neq y$, we have either $x \prec y$ or $x \succ y$.
	\item $\prec$ is \emph{left-invariant} if, for all $x,y,g \in G$, we have $x \prec y \Rightarrow gx \prec gy$.
	\item $\prec$ is \emph{bi-invariant} if it is both left-invariant and right-invariant. That is, if $x \prec y$, then $gx \prec gy$ and $xg \prec yg$, for all $x,y,g \in G$.
	\item $G$ is \emph{left-orderable} if there exists a left-invariant total order on~$G$.
	\item $G$ is \emph{locally nilpotent} if every finitely generated subgroup of~$G$ is nilpotent.
	\item $G$ is \emph{locally indicable} if every nontrivial finitely generated subgroup of~$G$ has an infinite, cyclic quotient.
	\end{itemize}
\end{defns}

\begin{defn}[{\cite{Chiswell-LIO}}]
A partial order $\prec$ on~$G$ is \emph{locally invariant} if, for all $x,y \in G$ with $y \neq e$, we have either $xy \succ x$ or $xy^{-1} \succ x$.
\end{defn}

\begin{rem}
It is an easy exercise \cite[Lem.~1.1]{Chiswell-LIO} to show that a group $G$ has a locally invariant order iff there exists a partially ordered set $(\mathcal{P}, \prec)$ and a function $\rho \colon G \to \mathcal{P}$, such that, for all $x,y \in G$ with $y \neq e$, we have either $\rho(xy) \succ \rho(x)$ or $\rho(xy^{-1}) \succ \rho(x)$. (When $G$ is countable, one may take $(\mathcal{P},\prec)$ to be $(\R, <)$.) For example, $\R^n$ has a locally invariant order, because we may take $\rho(x) = \|x\|$.
\end{rem}

The notion of an amenable group has many different definitions that are all equivalent to one another. We choose the one that is most convenient for our purposes.

\begin{defn}[{\cite[p.~9 and Thm.~5.4(i,iii)]{Pier}}] \label{AmenableDefn} \ %, p.~45
\noprelistbreak
 \begin{itemize} \itemsep=\smallskipamount
\item A measure~$\mu$ on a measure space~$X$ is said to be a
\emph{probability measure} if $\mu(X) = 1$.
\item A (discrete) group~$G$ is \emph{amenable} if for every continuous
action of~$G$ on a compact, Hausdorff space~$X$, there is a
$G$-invariant probability measure on~$X$.
\end{itemize}
 \end{defn}

 \begin{eg}[{\cite[Cors.~13.5 and 13.10]{Pier}}] \label{LocNilpIsAmen} %, pp.~120 and 121
 It is fairly easy to see that every solvable group is amenable. It is also easy to see that if every finitely generated subgroup of~$G$ is amenable, then $G$ is amenable. Therefore, every locally solvable group is amenable. In particular, every locally nilpotent group is amenable.
 \end{eg}

We also need the following two facts. The second is an easy observation, but the first is nontrivial.

\begin{lem}[{}{\cite[Props.~13.3 and 13.4]{Pier}}] \label{AmenFacts}
Assume $G$ is amenable. Then:
	\begin{enumerate}
	\item \label{AmenFacts-subgrp}
	every subgroup of~$G$ is amenable,
	and
	\item \label{AmenFacts-GxG}
	$G \times G$ is amenable.
	\end{enumerate}
\end{lem}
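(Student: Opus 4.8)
The plan is to prove the two statements separately; \pref{AmenFacts-GxG} is routine, so the effort goes into \pref{AmenFacts-subgrp}.

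For \pref{AmenFacts-subgrp}, let $H \le G$ and suppose $H$ acts continuously on a nonempty compact Hausdorff space~$X$; we must produce an $H$-invariant probability measure on~$X$. We would co-induce the $H$-space~$X$ up to a compact $G$-space. Fix a transversal~$T$ for the right cosets $H \backslash G$, so that every $g \in G$ is uniquely of the form $ht$ with $h \in H$ and $t \in T$, and put
\[
	Y = \{\, f \colon G \to X \mid f(hg) = h \cdot f(g) \ \text{ for all } h \in H,\ g \in G \,\} .
\]
Inside the product space~$X^{G}$, which is compact Hausdorff by Tychonoff's theorem, $Y$ is a closed subset (the defining relations are closed conditions, since the action map $H \times X \to X$ and the coordinate projections are continuous), and $Y \ne \emptyset$ (fixing $x_0 \in X$ and setting $f(ht) = h x_0$ defines an element of~$Y$); hence $Y$ is a nonempty compact Hausdorff space. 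The group $G$ acts on~$Y$ continuously by $(g_0 \cdot f)(g) = f(g g_0)$, and this action preserves the defining condition of~$Y$. By amenability of~$G$, there is a $G$-invariant probability measure~$\nu$ on~$Y$; let $\mu$ be the pushforward of~$\nu$ under the continuous evaluation map $\mathrm{ev} \colon Y \to X$, $\mathrm{ev}(f) = f(e)$. To see that $\mu$ is $H$-invariant, fix $h \in H$ and a Borel set $B \subseteq X$; the identity $f(h) = h \cdot f(e)$, valid for every $f \in Y$, shows that
\[
	\mathrm{ev}^{-1}(h^{-1} B) = \{\, f \in Y : f(h) \in B \,\}
\]
is a $G$-translate of $\mathrm{ev}^{-1}(B)$, so invariance of~$\nu$ under~$h$ (regarded as an element of~$G$) gives $\mu(h^{-1} B) = \nu\bigl(\mathrm{ev}^{-1}(h^{-1}B)\bigr) = \nu\bigl(\mathrm{ev}^{-1}(B)\bigr) = \mu(B)$. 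Thus $H$ is amenable.

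For \pref{AmenFacts-GxG}, let $K = G \times G$ act continuously on a compact Hausdorff space~$X$, and identify $G$ with each of the commuting subgroups $G \times \{e\}$ and $\{e\} \times G$ of~$K$. Let $M$ be the set of $(G \times \{e\})$-invariant probability measures on~$X$. Viewing probability measures as functionals in the unit ball of $C(X)^*$, one sees that $M$ is convex and weak*-closed, hence (by the Banach--Alaoglu theorem) a compact Hausdorff space in the weak* topology; $M$ is nonempty because $G \times \{e\} \cong G$ is amenable; and $M$ is carried into itself by $\{e\} \times G$, because that subgroup commutes with $G \times \{e\}$. Applying amenability of $\{e\} \times G \cong G$ to its continuous action on~$M$ produces a $(\{e\} \times G)$-invariant probability measure~$\lambda$ on~$M$, and the barycenter $\overline{\mu} = \int_M \mu \, d\lambda(\mu) \in C(X)^*$ is then a probability measure on~$X$ that is invariant under $G \times \{e\}$ (each $\mu \in M$ is) and under $\{e\} \times G$ (since $\lambda$ is), hence is invariant under all of~$K$.

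The one genuine obstacle is finding the construction in \pref{AmenFacts-subgrp}: once one passes to the space of $H$-equivariant maps $G \to X$, all of the verifications --- closedness and nonemptiness of~$Y$, continuity of the $G$-action, and the bookkeeping identifying the pushforward measure as $H$-invariant --- are mechanical. Statement \pref{AmenFacts-GxG} is then exactly the kind of two-step averaging that is standard once \cref{AmenableDefn} is adopted, which is why it can fairly be called an ``easy observation''.
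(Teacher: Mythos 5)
The paper does not prove this lemma at all --- it simply cites Pier (Props.~13.3 and 13.4) and moves on --- so there is no in-paper argument to compare against; what you have written is a correct, self-contained proof working directly from the paper's chosen definition of amenability (\cref{AmenableDefn}). Your co-induction argument for \pref{AmenFacts-subgrp} is sound: $Y$ is indeed a nonempty closed $G$-invariant subset of $X^G$, the right-translation action of~$G$ commutes with the left $H$-equivariance condition defining~$Y$, and the computation $\mathrm{ev}^{-1}(h^{-1}B) = \{f : f(h) \in B\} = \{f : h\cdot f \in \mathrm{ev}^{-1}(B)\}$ correctly converts $G$-invariance of~$\nu$ into $H$-invariance of the pushforward. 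The product argument for \pref{AmenFacts-GxG} is the standard double-averaging: the only step you leave implicit is the identification of probability measures with states on $C(X)$ via the Riesz representation theorem, which is what makes the barycenter $\overline{\mu}$ a genuine (regular Borel) probability measure and makes weak*-compactness of~$M$ available; that is routine and worth at most a sentence. Note also that your division of labor matches the paper's own comment that \pref{AmenFacts-subgrp} is the nontrivial part and \pref{AmenFacts-GxG} an easy observation; the gain of your route over simply citing Pier (whose treatment is phrased in terms of invariant means) is that everything is carried out in the invariant-measure-on-compact-spaces formulation that the rest of the paper actually uses.
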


\subsection{Topology and action on the space of binary relations} \label{TopologySubsect}
A.\,S.\,Sikora \cite{Sikora-TopLO} introduced a topology on the space of left-invariant total orders on~$G$, and \'E.\,Ghys (personal communication) observed that it would be useful to study the natural action of~$G$ on this space. For our present purposes, we describe these ideas in the context of more general binary relations on~$G$, not just left-invariant orders.

\begin{defn} \label{P(X)Topology}
The collection of all subsets of a set~$X$ can be identified with the collection $2^X$ of all functions $f \colon X \to \{0,1\}$ (by identifying a subset with its characteristic function). Since $2^X$ can also be viewed as the Cartesian product of $\#X$ copies of the finite set $\{0,1\}$, Tychonoff's Theorem provides it with a natural topology, in which it is a compact Hausdorff space. (And it is metrizable if $X$ is countable.)
\end{defn}

\begin{defn}
For any set~$X$, each subset of $X \times X$ is said to be a \emph{binary relation} on~$X$. Therefore, \cref{P(X)Topology} tells us that the set of all binary relations on~$X$ has the topology of a compact Hausdorff space. (Hence, the same is true for any of its closed subsets.)
The topology is defined so that
	$$ \text{for any $x,y \in X$, the subset $\{\, R \in 2^{X \times X} \mid x \rel y \,\}$ is both open and closed} . $$
Therefore, any subset that is defined by a Boolean combination of finitely many assertions of the form $x_1 \prec y_1$, $x_2 \prec y_2$, \dots,  $x_n \prec y_n$ is also closed (and open). So the intersection of any collection of such subsets (even an infinite collection) is closed (but may not be open).
\end{defn}

\begin{rem}
 For any subgroup~$H$ of~$G$, there is a natural restriction map from $2^{G \times G}$ to $2^{H \times H}$. It is obvious that this is continuous.
\end{rem}

\begin{defn}
Let $G$ be an abstract group. Then $G$ acts on $2^{G \times G}$ by both left-translations and right-translations. These commute, so there is an action of $G \times G$ on $2^{G \times G}$, defined by
	$$ x \rel^{(g,h)} y \iff g x h^{-1} \rel g y h^{-1} .$$
It is clear that this is an action by homeomorphisms.
\end{defn}

\begin{eg}
Let $G$ be a group. Here are some important examples of closed subsets of~$2^{G \times G}$ that are invariant under the action of $G \times G$.
	\begin{enumerate}
	\item The set of all partial orders on~$G$, defined by the axioms
		$$ x \not\prec x $$
		$$ (x \prec y) \AND (y \prec z) \implies x \prec z $$
	\item The set of locally invariant orders on $G$, defined by the axioms for a partial order, together with
		$$ y \neq e \implies (x \prec xy) \OR (x \prec xy^{-1}) $$
	\item The set of all total orders on~$G$, defined by the axioms for a partial order, together with
		$$ x \neq y \implies (x \prec y) \OR (x \succ y) $$
	\item (Sikora \cite{Sikora-TopLO})
	The set of left-invariant total orders on $G$, defined by the axioms for a total order, together with
		$$ x \prec y \implies zx \prec zy $$
	\item (Navas \cite[Prop.~3.7]{Navas-DynamicsOfLO})
	The set of Conradian orders on~$G$, defined by the axioms for a left-invariant total order, together with
		$$ (x \succ e) \AND (y \succ e)  \implies xy^2 \succ y $$
	\end{enumerate}
\end{eg}

\section{Recurrence in the space of binary relations} \label{RecurSect}

\begin{defn}[{}{cf.\ \cite[Defn.~3.2]{Morris-AmenOnLine}}]
Let $G$ be a group, and let ${\rel} \in 2^{G \times G}$. 
	\begin{itemize}
	\item For $(g,h) \in G \times G$, we say $\rel$ is \emph{recurrent for $(g,h)$} if, for every finite subset $F$ of~$G$, there exists $n \in \Z^+$, such that $\rel^{(g,h)^n}$ and~$\rel$ have the same restriction to~$F$. (If $G$ is countable, this is equivalent to the assertion that there is a sequence $n_i \to \infty$, such that $\rel^{(g,h)^{n_i}} \to \rel$ as $i \to \infty$.)
	\item $\rel$ is \emph{recurrent} if it is recurrent for every element of $G \times G$.
	\end{itemize}
\end{defn}

It is important to realize that most groups do not have a left-invariant total order that is recurrent:

\begin{lem}[{\cite[Cor.~4.4]{Morris-AmenOnLine}}] \label{RecurrentLO->LocIndic}
If $G$ has a left-invariant total order that is recurrent, then $G$ is locally indicable.
\end{lem}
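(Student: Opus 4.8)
The plan is to deduce \cref{RecurrentLO->LocIndic} from the theory of ordered groups together with a concrete construction of an infinite cyclic quotient from a recurrent order. Let $G$ be a group with a recurrent left-invariant total order~$\prec$; since local indicability is a property of finitely generated subgroups, and the restriction of a recurrent order to a subgroup is again a left-invariant order (and, I expect, still recurrent — this needs a small check using the restriction map), it suffices to treat the case that $G$ is finitely generated and nontrivial and to produce a surjection $G \to \Z$.

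The natural route is via the \emph{least positive element}. Because $G$ is finitely generated, its positive cone $P = \{\, g \in G \mid g \succ e \,\}$ is generated as a semigroup together with its inverse by the finite generating set, and one shows that $P$ has a $\prec$-minimal element~$a$: if there were an infinite strictly decreasing sequence $a_1 \succ a_2 \succ \cdots$ of positive elements, then the finite set $F = \{e, a_1, \ldots, a_k\}$ together with recurrence for a suitable pair $(g,h)$ (here one takes the right translation action, i.e.\ $h = $ some generator and $g=e$, or vice versa) would force a contradiction with the order being Archimedean-like on~$F$ — more precisely, recurrence lets us translate the bottom of this chain back up past itself, contradicting left-invariance. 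The subgroup $C = \langle a \rangle$ is then convex: any element strictly between $a^n$ and $a^{n+1}$ would, after left-translation by $a^{-n}$, be a positive element below~$a$, impossible. A convex subgroup of a left-ordered group need not be normal in general, but here the key point is that recurrence of~$\prec$ for the right-translations forces $C$ to be normal: for each $g \in G$, the conjugate $g C g^{-1}$ is again convex with least positive element $g a g^{-1}$, and recurrence for the pair involving conjugation by~$g$ identifies the order near the identity with its $g$-translate often enough to conclude $g a g^{-1} \in C$. Then $G/C$ inherits a left-invariant order, and one finishes by showing $C$ itself is the whole group or, if not, inducting on a chain of convex subgroups; the bottom convex jump $C$ gives $G \twoheadrightarrow \Z$ when $C = G$, and in general the standard jump argument produces the infinite cyclic quotient.

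An alternative, and probably cleaner, approach is to invoke a known theorem: a finitely generated left-orderable group whose order is \emph{Conradian} is locally indicable (this is essentially Conrad's theorem, and Navas's reformulation appears in the excerpt's example list), so it would suffice to show that a recurrent left-invariant total order is automatically Conradian, i.e.\ satisfies $x \succ e \AND y \succ e \implies x y^2 \succ y$. If $xy^2 \preceq y$ for some positive $x,y$, then $y^{-1} x y \cdot y \preceq e$, so $y^{-1} x y \preceq y^{-1} \prec e$, meaning conjugation by $y^{-1}$ sends the positive element $x$ to something below $y^{-1}$; iterating, $y^{-n} x y^n$ marches off to $-\infty$ past every power of $y^{-1}$, and applying recurrence for the pair $(y^{-1}, y^{-1})$ (conjugation by $y$) on the finite set $\{e, x, y, y^{-1}, \ldots\}$ produces a return that contradicts this unbounded descent. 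This reduces the whole lemma to citing the Conrad–Navas result.

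The main obstacle in either approach is pinning down exactly how recurrence for the relevant element of $G \times G$ is used — in the first approach to get both the existence of the least positive element and the normality of the convex subgroup it generates, and in the second to rule out the Conradian failure. The subtlety is that recurrence is a statement about finite subsets $F$ and return times $n$, so each contradiction must be phrased as: choose $F$ large enough to ``see'' the alleged infinite descent up to some stage, then the return $\rel^{(g,h)^n} \!\restriction_F \,=\, \rel \!\restriction_F$ contradicts strict monotonicity of the relevant conjugation or translation on~$F$. Getting this finite/infinite bookkeeping right, and confirming that the restriction of a recurrent relation to a subgroup remains recurrent (so the reduction to the finitely generated case is legitimate), are the points I would be most careful about; everything else is standard ordered-group manipulation.
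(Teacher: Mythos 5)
Your second route is the paper's actual proof in outline (recurrent $\Rightarrow$ Conradian $\Rightarrow$ locally indicable, by Conrad's theorem), but neither of your two routes is carried out correctly, and the first one fails outright. For the first approach: a recurrent order need not have a least positive element, so the convex-jump construction cannot get started. Indeed, any bi-invariant order is recurrent (it is literally fixed by every element of $G\times G$, so every return time works), and the bi-invariant order on $\Z^2$ defined by $u \prec v \iff \lambda(u)<\lambda(v)$ for an irrational linear form $\lambda$ has no minimal positive element, since the values of $\lambda$ on the positive cone are dense near~$0$. No recurrence bookkeeping will produce the element~$a$, because it does not exist.

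For the second approach, you aimed at the Navas $n=2$ form of the Conradian condition, $x,y\succ e \implies xy^2\succ y$, and the sketch does not close: from $xy^2\preceq y$ you correctly get $y^{-1}xy\prec e$, but the proposed ``iteration'' is unjustified (the conjugate is now negative, so the same step does not apply to it), and it is never specified which finite set and which return of which pair yields the contradiction. Moreover, recurrence only ever hands you \emph{some} exponent~$n$, never $n=2$, so the $n=2$ formulation is the wrong target; what recurrence gives painlessly is the classical Conradian condition ``for all $x,y\succ e$ there exists $n$ with $xy^n\succ y$.'' That is the paper's two-line argument: apply recurrence of~$\prec$ for right-translation by~$y$ to the single relation $e\prec x$ (the finite set $F=\{e,x\}$) to get $n\in\N^+$ with $xy^n\succ y^n$, and note $y^n\succeq y$ by left-invariance since $y\succ e$; hence $xy^n\succ y$. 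Conrad's theorem (any group admitting a Conradian order is locally indicable) then finishes the proof for $G$ itself, so the reduction to finitely generated subgroups, and the check that restrictions of recurrent orders are recurrent, are unnecessary --- though that check is indeed easy and correct, since a finite subset of a subgroup is a finite subset of~$G$.
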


\begin{proof}
Recall that a left-invariant total order~$\prec$ on~$G$ is said to be \emph{Conradian} \cite[Lem.~2.4.1(c)]{KopytovMedvedev-ROGrps} if for all $x,y \in G$ with $x , y \succ e$, there exists $n \in \N^+$, such that $x y^n \succ y$. It is easy to see that every recurrent left-invariant total order is Conradian (because there is some $n$ with $x y^n \succ y^n \succ y$), and it is well known that any group with a Conradian order must be locally indicable \cite{Conrad-ROGrps}, \cite[Thm.~2.4.1]{KopytovMedvedev-ROGrps}.
\end{proof}

The following theorem is the main result of \cite{Morris-AmenOnLine} (and is the culmination of a series of previous theorems of A.\,H.\,Rhemtulla, I.\,M.\,Chiswell, P.\,H.\,Kropholler, and P.\,A.\,Linnell that have stronger hypotheses in the place of ``amenable'').

\begin{thm}[{D.\,W.\,Morris {\cite{Morris-AmenOnLine}}}] \label{LOAmen}
If $G$ is a countable, amenable group, and $G$ has a left-invariant total order, then $G$ has a left-invariant total order that is recurrent.
\end{thm}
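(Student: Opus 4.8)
The plan is to make the set $\LO(G)$ of left-invariant total orders on~$G$ into a compact dynamical system and produce a recurrent point by invoking the Poincar\'e recurrence theorem.

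First I would record the basic structure of this system. As noted in \cref{TopologySubsect}, $\LO(G)$ is defined inside $2^{G\times G}$ by closed conditions (the partial-order axioms together with totality and left-invariance), so it is a closed, hence compact, subspace; since $G$ is countable, $2^{G\times G}$ — and therefore $\LO(G)$ — is metrizable, and a clopen basis of neighborhoods of a point ${\prec}\in\LO(G)$ is given by the sets
\[ N_F({\prec}) = \{\, S\in 2^{G\times G} \mid S|_{F\times F} = {\prec}|_{F\times F} \,\}, \]
with $F$ ranging over the finite subsets of~$G$, of which there are only countably many in all. By hypothesis $\LO(G)\neq\emptyset$. The action of $G\times G$ on $2^{G\times G}$ is by homeomorphisms and carries $\LO(G)$ into itself: the left-translation factor acts trivially on $\LO(G)$ (if $\prec$ is left-invariant, then ``$x\prec^{(g,e)}y$'', i.e.\ $gx\prec gy$, is equivalent to $x\prec y$), while for the right-translation factor, left-invariance of~$\prec$ at once yields left-invariance of~$\prec^{(e,h)}$. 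Thus $G\times G$ acts by homeomorphisms on the nonempty compact metrizable space $\LO(G)$.

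Next I would introduce an invariant measure and apply Poincar\'e recurrence. Since $G$ is countable, so is $G\times G$, and by \fullcref{AmenFacts}{GxG} the group $G\times G$ is amenable; hence by \cref{AmenableDefn} there is a $(G\times G)$-invariant Borel probability measure~$\mu$ on $\LO(G)$. Fix $(g,h)\in G\times G$ and let $T$ be the resulting $\mu$-preserving homeomorphism, so that $T^n{\prec} = {\prec}^{(g,h)^n}$ for all~$n$. For each of the countably many basic clopen sets $B\subseteq\LO(G)$, the Poincar\'e recurrence theorem says that $\mu$-almost every point of~$B$ lies in~$B$ again under some positive power of~$T$; taking the union over all such~$B$, the set of $\prec$ that fail to re-enter some basic neighborhood under~$T$ is $\mu$-null. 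Unwinding the definitions, this says that for $\mu$-almost every~$\prec$ and every finite $F\subseteq G$ there is $n\in\Z^+$ with ${\prec}^{(g,h)^n}\in N_F({\prec})$ — that is, $\prec$ is recurrent for $(g,h)$. Intersecting these $\mu$-conull sets over the countably many pairs $(g,h)\in G\times G$ gives a $\mu$-conull, hence nonempty, set of orders that are recurrent for every element of $G\times G$; any one of them is a recurrent left-invariant total order on~$G$.

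I do not expect a serious obstacle: there is no combinatorial content, and the substantive inputs are just amenability (to get the invariant measure) and Poincar\'e recurrence. The countability hypothesis enters in exactly one place — to pass, via a countable intersection of conull sets, from ``$\mu$-a.e.\ order is recurrent for a fixed transformation'' to ``some single order is recurrent for all of them''. The only points needing care are the routine checks that the $(G\times G)$-action restricts to $\LO(G)$ and is by homeomorphisms, and the translation between the ``same restriction to~$F$'' formulation of recurrence and the clopen basis $\{N_F({\prec})\}$ of $2^{G\times G}$. One could even avoid $G\times G$ altogether, since for left-invariant~$\prec$ one has ${\prec}^{(g,h)^n}={\prec}^{(e,h)^n}$, so that recurrence for $(g,h)$ coincides with recurrence for $(e,h)$ and the amenability of $G$ acting by right translations already suffices; but nothing is lost by working with $G\times G$.
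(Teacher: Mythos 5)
Your proof is correct and uses essentially the same method as the paper: a $(G\times G)$-invariant probability measure supplied by amenability, Poincar\'e recurrence applied to each transformation, and a countable intersection of conull sets to get a single recurrent left-invariant order. The only difference is that the paper runs this argument on the orbit closure of a given order inside $2^{G\times G}$ rather than on all of $\LO(G)$ (which is what yields the stronger \cref{ConvergeToRecur} used later), but for the theorem itself this is immaterial.
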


The proof actually establishes the following stronger statement:

\begin{prop} \label{ConvergeToRecur}
Let
\noprelistbreak
	\begin{itemize} \itemsep=\smallskipamount
	\item $G$ be a countable, amenable group,
	and
	\item $\rel$ be a binary relation on~$G$.
	\end{itemize}
Then there exists a sequence\/ $\{(g_n,h_n)\}_{n=1}^\infty$ of elements of $G \times G$, such that\/ $\{{\rel^{(g_n,h_n)}}\}_{n=1}^\infty$ converges to a binary relation that is recurrent.
\end{prop}

\begin{proof}
For the reader's convenience, we provide an outline of the proof.  See \cite{Morris-AmenOnLine} for more details of the main steps (\ref{AmenOnLOG}, \ref{PoincareLOG}, and~\ref{ReverseLOG}).
\noprelistbreak
	\begin{enumerate}  \itemsep=\smallskipamount
	\item \label{DefineClosure}
	Let $\closure{\rel^{G\times G}}$ be the closure of the ${G\times G}$-orbit of~$\rel$ in $2^{G\times G}$. 
	Note that $\closure{\rel^{G\times G}}$ is compact, since it is a closed subset of the compact space $2^{G\times G}$.
	\item \label{AmenOnLOG}
	Since ${G\times G}$ is amenable (see \fullcref{AmenFacts}{GxG}), there exists a $({G\times G})$-invariant probability measure on $\closure{\rel^{G\times G}}$.
	\item \label{PoincareLOG}
	Since there is an invariant probability measure, the Poincar\'e Recurrence Theorem \cite[Thm.~1]{Wikipedia-PoincareRecurrence} tells us, for each $(g,h) \in G \times G$, that almost every element of $\closure{\rel^{G\times G}}$ is recurrent for $(g,h)$.
	\item \label{ReverseLOG}
	Since $G \times G$ is countable, and the union of countably many sets of measure~$0$ is still a set of measure~$0$, we can reverse the quantifiers: for almost every $S \in \closure{\rel^{G\times G}}$, the binary relation~$S$ is recurrent for every $(g,h) \in {G\times G}$. 
	\item Since $G$ is countable, we know that $2^{G \times G}$ is a metric space, so there exists a sequence $\{(g_n,h_n)\}_{n=1}^\infty$ of elements of ${G\times G}$, such that ${\rel^{(g_n,h_n)}} \to {S}$ as $n \to \infty$.
	\qedhere
	\end{enumerate}
\end{proof}

\begin{cor} \label{RestrictAmenClosedSubset}
Let
\noprelistbreak
	\begin{itemize} \itemsep=\smallskipamount
	\item $G$ be a left-orderable group,
	\item $H$ be a countable, amenable subgroup of~$G$,
	and
	\item $\rels$ be a nonempty, closed, $(H \times H)$-invariant subset of\/ $2^{G \times G}$.
	\end{itemize}
Then there exists ${\rel} \in \rels$, such that the restriction of~$\rel$ to~$H$ is recurrent.
\end{cor}

\begin{proof}
Let ${\rel} \in \rels$, and let $\relH$ be the restriction of~$\rel$ to~$H$. Then \cref{ConvergeToRecur} provides a sequence\/ $\{(g_n,h_n)\}_{n=1}^\infty$ of elements of $H \times H$, such that\/ $\{{\relH^{(g_n,h_n)}}\}_{n=1}^\infty$ converges to a recurrent binary relation~$\relH^\infty$. 

Since $2^{G \times G}$ is compact, the sequence $\{{\rel^{(g_n,h_n)}}\}_{n=1}^\infty$ must have an accumulation point; call it~$\rel^\infty$. (Note that ${\rel^\infty} \in \rels$, since $\rels$ is closed and $(H \times H)$-invariant.)
Since the restriction map $2^{G \times G} \to2^{H \times H}$ is continuous, we know that the restriction of~$\rel^\infty$ to~$H$ must be an accumulation point of $\{{\relH^{(g_n,h_n)}}\}_{n=1}^\infty$. However, this sequence converges, so it has a unique accumulation point, namely~$\relH^\infty$. Therefore, the restriction of $\rel^\infty$ to~$H$ must be~$\relH^\infty$, which is recurrent.
\end{proof}

By letting $\rels$ be the set of left-invariant total orders, or the set of locally invariant orders, we see that the following two results are special cases of \cref{RestrictAmenClosedSubset}.

\begin{cor} \label{RestrictAmenIsRecur}
Let
\noprelistbreak
	\begin{itemize} \itemsep=\smallskipamount
	\item $G$ be a left-orderable group,
	and
	\item $H$ be a countable, amenable subgroup of~$G$.
	\end{itemize}
Then there exists a left-invariant total order~$\ll$ on~$G$, such that the restriction of~$\ll$ to~$H$ is recurrent.
\end{cor}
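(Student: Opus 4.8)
The plan is to begin with an arbitrary left-invariant total order on $G$, move it around by the right-translation action coming from $H$ until its restriction to $H$ becomes recurrent, and then pass to a limit inside the compact space of left-invariant total orders on~$G$. Concretely: fix a left-invariant total order $\prec$ on $G$ (it exists since $G$ is left-orderable) and let $R \in 2^{H \times H}$ be its restriction to~$H$. Since $H$ is countable and amenable, \cref{ConvergeToRecur} applies to the group~$H$ and the binary relation~$R$, and produces a sequence $\{(g_n,h_n)\}_{n=1}^\infty$ of elements of $H \times H$ such that $R^{(g_n,h_n)} \to S$ as $n \to \infty$ for some binary relation~$S$ on~$H$ that is recurrent.

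The next step is to transfer this sequence back to~$G$. The key elementary observation is that left-invariance of $\prec$ makes left-translations act trivially on it: for all $g,h \in G$ we have ${\prec^{(g,h)}} = {\prec^{(e,h)}}$, because $g x h^{-1} \prec g y h^{-1} \iff x h^{-1} \prec y h^{-1}$ by left-invariance of $\prec$. Moreover $\prec^{(e,h)}$ is again a left-invariant total order on $G$, since right-translations carry total orders to total orders and preserve left-invariance. Hence each $\ll_n := {\prec^{(g_n,h_n)}}$ lies in the set $\mathcal{L}$ of left-invariant total orders on~$G$. Finally, because $g_n,h_n \in H$ and the restriction map $2^{G\times G} \to 2^{H\times H}$ is $(H\times H)$-equivariant, the restriction of $\ll_n$ to $H$ is precisely $R^{(g_n,h_n)}$.

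Now I would take a limit. The set $\mathcal{L}$ is a closed subset of the compact space $2^{G \times G}$, hence is itself compact, so the sequence $\{\ll_n\}$ has a cluster point $\ll \in \mathcal{L}$; that is, $\ll$ is a left-invariant total order on~$G$. The restriction map to $H$ is continuous, so $\ll|_H$ is a cluster point of $\{\ll_n|_H\} = \{R^{(g_n,h_n)}\}$; but this latter sequence converges to~$S$ in $2^{H\times H}$, which is metrizable because $H$ is countable, so its only cluster point is~$S$. Therefore $\ll|_H = S$, which is recurrent, and $\ll$ is the required left-invariant total order on~$G$.

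I expect the only genuine subtlety to be that $G$ is not assumed countable, so $2^{G\times G}$ and $\mathcal{L}$ need not be metrizable and one cannot simply extract a convergent subsequence of $\{\ll_n\}$; the remedy is to work with an arbitrary cluster point of $\{\ll_n\}$ and to exploit the metrizability of the \emph{target} space $2^{H\times H}$ to identify the restriction of the limit. Everything else — the identity ${\prec^{(g,h)}} = {\prec^{(e,h)}}$, the preservation of left-invariance by right-translations, and the equivariance of the restriction map for $(g,h) \in H \times H$ — is routine verification.
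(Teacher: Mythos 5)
Your proposal is correct and follows essentially the same route as the paper's own proof: apply \cref{ConvergeToRecur} to the restriction of a left-invariant total order to~$H$, lift the translating sequence back to $2^{G\times G}$, take an accumulation point in the compact (closed) set of left-invariant total orders on~$G$, and identify its restriction to~$H$ with the recurrent limit via continuity of the restriction map and uniqueness of limits in the Hausdorff space $2^{H\times H}$. The extra observations you make (triviality of the left factor of the action on a left-invariant order, metrizability of $2^{H\times H}$) are harmless refinements of the same argument.
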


\begin{cor} \label{RestrictAmenIsRecurLIO}
Let
\noprelistbreak
	\begin{itemize} \itemsep=\smallskipamount
	\item $G$ be a group with a locally invariant order,
	and
	\item $H$ be a countable, amenable subgroup of~$G$.
	\end{itemize}
Then there exists a locally invariant order\/~$\prec$ on~$G$, such that the restriction of\/~$\prec$ to~$H$ is recurrent.
\end{cor}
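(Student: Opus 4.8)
The plan is to imitate the proof of \cref{RestrictAmenIsRecur} essentially word for word, with the space of left-invariant total orders on~$G$ replaced by the set~$\mathcal{L}_G$ of all locally invariant orders on~$G$ (and similarly $\mathcal{L}_H$ for~$H$). For this substitution to be legitimate I need two facts: that $\mathcal{L}_G$ is a closed, $(G\times G)$-invariant subset of $2^{G\times G}$, and that the restriction to~$H$ of any order in~$\mathcal{L}_G$ belongs to~$\mathcal{L}_H$. Both were essentially already observed above (the set of locally invariant orders appears among the listed examples of closed $(G\times G)$-invariant subsets), so the corollary is really just another instance of the remark preceding it; I write the argument out only to confirm that nothing peculiar to total orders is being used.

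First I would check the two facts. Closedness of~$\mathcal{L}_G$ is immediate: $\mathcal{L}_G$ is the intersection, over all $x,y,z \in G$, of the clopen subsets of $2^{G\times G}$ defined by the partial-order axioms together with the single instance ``$y \neq e \implies (x \prec xy) \OR (x \prec xy^{-1})$'' of local invariance, and an intersection of closed sets is closed. That restriction carries~$\mathcal{L}_G$ into~$\mathcal{L}_H$ is clear, because for $x,y \in H$ the elements $xy$ and $xy^{-1}$ lie in~$H$ as well. Left-invariance of~$\mathcal{L}_G$ under $G\times G$ is obvious; the only computation worth recording is right-invariance. Suppose $\prec \in \mathcal{L}_G$ and $h \in G$. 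Given $x \in G$ and $y \neq e$, set $a = xh^{-1}$ and note that $xyh^{-1} = a\,(hyh^{-1})$ and $xy^{-1}h^{-1} = a\,(hyh^{-1})^{-1}$, with $hyh^{-1} \neq e$; applying local invariance of~$\prec$ to the element~$a$ and the nontrivial element~$hyh^{-1}$ then shows that the right-translate of~$\prec$ by~$h$ is locally invariant.

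Granting these, I would now repeat the proof of \cref{RestrictAmenIsRecur}. Start with a locally invariant order~$\prec$ on~$G$, and let~$<$ be its restriction to~$H$, so $< \in \mathcal{L}_H$. Apply \cref{ConvergeToRecur} to the countable amenable group~$H$ and the binary relation~$<$: it yields a sequence $\{(g_n,h_n)\}_{n=1}^{\infty}$ in $H\times H$ with ${<^{(g_n,h_n)}} \to {<^\infty}$ for some recurrent binary relation~$<^\infty$ on~$H$. Since $2^{G\times G}$ is compact, $\{\prec^{(g_n,h_n)}\}_{n=1}^{\infty}$ has an accumulation point~$\prec^\infty$, and $\prec^\infty \in \mathcal{L}_G$ because $\mathcal{L}_G$ is closed and invariant under the elements $(g_n,h_n) \in H\times H \subseteq G\times G$. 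The restriction map $2^{G\times G}\to 2^{H\times H}$ is continuous and sends $\prec^{(g_n,h_n)}$ to $<^{(g_n,h_n)}$, so the restriction of~$\prec^\infty$ to~$H$ is an accumulation point of the convergent sequence $\{<^{(g_n,h_n)}\}$ and therefore equals its limit~$<^\infty$, which is recurrent. Thus $\prec^\infty$ is the required order. I expect no real obstacle here: every step except the one-line conjugation identity behind right-invariance of~$\mathcal{L}_G$ is pure bookkeeping carried over verbatim from \cref{RestrictAmenIsRecur}.
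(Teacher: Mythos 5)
Your proposal is correct and matches the paper's own argument: the paper proves this corollary simply by noting that the proof of \cref{RestrictAmenIsRecur} goes through with any closed, $(G\times G)$-invariant subset of $2^{G\times G}$ (here, the set of locally invariant orders) in place of the left-invariant total orders, which is exactly what you do, including the needed verification that this set is closed and invariant and that restriction lands in the corresponding set for~$H$.
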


\begin{rems} \ 
\noprelistbreak
	\begin{enumerate} \itemsep=\smallskipamount
	\item If $\prec$ is left-invariant, then $\prec^{(g,h)}$ is independent of~$g$, so we may write $\prec^h$.
	\item By letting $\rels = \closure{\prec^H}$, we see that the order~$\ll$ in the conclusion of \cref{RestrictAmenIsRecur} can be chosen to be in $\closure{\prec^H}$. 
	\item Furthermore, if $C$ is any countable subset of~$G$, then $\ll$ can be chosen so that $\ll$ is ``recurrent for~$H$ on~$C$.'' That is, for all $h \in H$ and all $x_1,x_2,\ldots,x_r \in C$ with $x_1 \prec x_2 \prec \cdots \prec x_r$, there exists $n \in \N^+$, such that $x_1 h^n \prec x_2 h^n \prec \cdots \prec x_r h^n$.
	\item Therefore, if $G$ is countable, then $\ll$ can be chosen to be recurrent for every element of~$H$, and there is a sequence $\{h_n\}_{n=1}^\infty$ of elements of~$H$, such that ${\prec^{h_n}} \to {\ll}$.
	\end{enumerate}
\end{rems}

\section{Proof of \cref{AMENLIO}} \label{AMENLIOPfSect}

\begin{prop} \label{DescribeRecurrent}
Let $\prec$ be a locally invariant order on~$G$ that is recurrent for all right-translations. Then:
	\begin{enumerate}
	\item \label{DescribeRecurrent-total}
	The restriction of $\prec$ to any left coset of any cyclic subgroup of~$G$ is either the standard linear order or its reverse. That is, for any $g,x \in G$, with $x \neq e$, we have either
		$$\cdots \prec gx^{-2} \prec gx^{-1} \prec g \prec gx \prec gx^2 \prec \cdots ,$$
	or
	$$ \cdots \succ gx^{-2} \succ gx^{-1} \succ g \succ gx \succ gx^2 \succ \cdots .$$
	{\upshape(}In particular, $\prec$ is a total order on~$G$.{\upshape)}
	\item \label{DescribeRecurrent-cone}
The positive cone of~$\prec$ is closed under multiplication.
	\item \label{DescribeRecurrent-LO}
	$G$ is left-orderable.
	\end{enumerate}
\end{prop}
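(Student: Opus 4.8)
The plan is to prove conclusion~\pref{DescribeRecurrent-total} first — this is the substance of the proposition — and then deduce~\pref{DescribeRecurrent-cone} and~\pref{DescribeRecurrent-LO} cheaply. Fix $g,x\in G$ with $x\neq e$ and consider the restriction of~$\prec$ to the coset $C=g\langle x\rangle=\{\,gx^n\mid n\in\Z\,\}$. Two facts come for free from local invariance: applied at the point~$gx^{n+1}$ (with~$x$ in the role of~$y$) it gives $gx^n\prec gx^{n+1}\Rightarrow gx^{n+1}\prec gx^{n+2}$; applied at~$gx^n$ it gives $gx^{n+1}\prec gx^n\Rightarrow gx^n\prec gx^{n-1}$, and also shows that $gx^n$ is never a local maximum of~$C$. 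So, along~$C$, a comparison of consecutive elements \emph{propagates} (an increase keeps increasing and a decrease keeps decreasing) and no point is a local maximum. What local invariance does \emph{not} rule out is a V-shaped order on~$C$ with a single local minimum, or a pair of consecutive elements of~$C$ that are not $\prec$-comparable, and recurrence is exactly what eliminates these.

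The key step is to show that, for each $y\in G$, we have neither (a)~$y\prec yx$ together with $y\prec yx^{-1}$, nor (b)~$y$ incomparable with~$yx$. Suppose~(a) holds. Propagation forces the left branch of~$C$ to increase away from~$y$: $y\prec yx^{-1}\prec yx^{-2}\prec\cdots$. Now apply recurrence for $(e,x)$ (a right-translation) to the finite set $F=\{y,yx\}$: for some $n\in\Z^+$ the relations $\prec$ and $\prec^{(e,x)^n}$ agree on~$F$, so $y\prec yx$ yields $yx^{-n}\prec yx^{1-n}$ — contradicting $yx^{1-n}\prec yx^{-n}$, which holds because the left branch is increasing, whatever~$n\in\Z^+$ recurrence supplies. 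Case~(b) runs the same way: local invariance at~$y$ and at~$yx$ (each time discarding one alternative using the incomparability) forces $y\prec yx^{-1}\prec yx^{-2}\prec\cdots$ and $yx\prec yx^2\prec\cdots$, and recurrence for $(e,x^{-1})$ applied to $\{y,yx^{-1}\}$ gives $yx^n\prec yx^{n-1}$ for some $n\in\Z^+$, which contradicts the incomparability of $y$ and~$yx$ if $n=1$ and the increasing branch through~$yx$ if $n\ge 2$. With the key step in hand, comparability of consecutive elements makes the sequence of consecutive comparisons along~$C$ well defined, propagation forbids a decrease following an increase, and the absence of a local minimum forbids an increase following a decrease; so the sequence is constant and $\prec$ restricted to~$C$ is one of the two chains in the statement. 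Finally, taking $x=p^{-1}q$ shows that any two elements $p,q$ of~$G$ are comparable, so $\prec$ is total, and an~$x$ of finite order is impossible because a monotone order on a finite cyclic coset would give $g\prec g$.

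For~\pref{DescribeRecurrent-cone}, let $P=\{\,h\in G\mid e\prec h\,\}$ and take $a,b\in P$. By~\pref{DescribeRecurrent-total} the coset~$\langle b\rangle$ is monotone, and $e\prec b$ makes it increasing, so $b^n\succ e$ for all $n\ge 1$. The coset $a\langle b\rangle$ is monotone too: if it is increasing then $ab\succ a\succ e$; if it is decreasing and (towards a contradiction) $ab\prec e$, then $ab^n\prec e$ for all $n\ge 1$, whereas recurrence for $(e,b^{-1})$ applied to $\{e,a\}$ gives $b^n\prec ab^n$ for some $n\in\Z^+$, contradicting $ab^n\prec e\prec b^n$. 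Hence $ab\in P$. Conclusion~\pref{DescribeRecurrent-LO} is then the standard passage from a multiplicatively closed positive cone to a left order: \pref{DescribeRecurrent-total} gives $P\cap P^{-1}=\emptyset$ and $G=P\sqcup\{e\}\sqcup P^{-1}$ (here $a\prec e$ forces the monotone coset~$\langle a\rangle$ to be decreasing, hence $e\prec a^{-1}$), so together with~\pref{DescribeRecurrent-cone} the relation $g\sqsubset h\iff g^{-1}h\in P$ is a left-invariant total order on~$G$.

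The real obstacle is the key step in~\pref{DescribeRecurrent-total}: recognizing that local invariance leaves exactly the V-shaped and the incomparable-neighbour configurations open, and then choosing the finite set~$F$ and the right-translation so that a single application of recurrence closes each of them. After that, the propagation bookkeeping, the consequences for totality and torsion-freeness, and conclusions~\pref{DescribeRecurrent-cone} and~\pref{DescribeRecurrent-LO} are routine.
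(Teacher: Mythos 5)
Your proposal is correct and takes essentially the same approach as the paper: local invariance propagates comparisons along a cyclic coset and recurrence for right-translations eliminates the remaining bad configurations, giving part~(1), after which recurrence again yields closure of the positive cone and the standard positive-cone criterion gives left-orderability. The only real difference is expository: you spell out the case analysis (V-shaped minimum versus an incomparable consecutive pair) that the paper compresses into ``perhaps after replacing $g$ with $gx^n$,'' and your choice of which translate to feed into recurrence handles every return time $n\in\Z^+$ directly, where the paper implicitly uses that return times can be taken large.
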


\begin{proof}
\pref{DescribeRecurrent-total} Suppose this conclusion does not hold. Then, perhaps after replacing $g$ with~$g x^n$, for some $n \in \Z$, we have
	$$ \text{$g \prec gx \prec gx^2 \prec gx^3 \prec \cdots$ \ and \ $gx^{-1} \prec gx^{-2} \prec gx^{-3} \prec \cdots$} $$
Since $gx^{-1} \prec gx^{-2}$, and $\prec$ is recurrent for right-translation by~$x$, there exists $k \in\Z^+$, such that
	$(gx^{-1}) x^{k+2} \prec (gx^{-2}) x^{k+2}$.
This means $gx^{k+1} \prec gx^k$, which contradicts the fact that $g \prec gx \prec gx^2 \prec \cdots$

\pref{DescribeRecurrent-cone} Suppose there exist $x$ and~$y$, such that $x \succ e$ and $y \succ e$, but $xy \not\succ e$.  Since \pref{DescribeRecurrent-total} tells us that $\prec$ is a total order, we must have $xy \prec e$. Then $x \succ xy$ (because $x \succ e \succ xy$), so, from~\pref{DescribeRecurrent-total}, we must have
	$$ x \succ x y \succ x y^2 \succ \cdots ,$$
so 
	$$ \text{$e \succ xy \succeq xy^n$, for all $n \in \mathbb{Z}^+$} .$$
On the other hand, since $\prec$ is recurrent for right-translation by~$y$, and $x \succ e$, we know there is some $n \in \mathbb{Z}^+$, such that $x y^n \succ e y^n \succ e$. This is a contradiction.

\pref{DescribeRecurrent-LO} Let $P = \{\, x \in G \mid x \succ e \,\}$ be the positive cone of~$\prec$. For any~$x \in G$ with $x \neq e$, letting $g = e$ in \pref{DescribeRecurrent-total} tells us that either $x \in P$ or $x^{-1} \in P$ (but not both). Furthermore, \pref{DescribeRecurrent-cone} tells us that $P$ is closed under multiplication. Therefore $P$ is the positive cone of a left-invariant total order on~$G$ \cite[Thm.~1.5.1]{KopytovMedvedev-ROGrps} (but the left-invariant order may be different from~$\prec$).
\end{proof}

\begin{proof}[\bf Proof of \cref{AMENLIO}]
Assume $G$ is an amenable group that has a locally invariant order. We wish to show that $G$ is left-orderable. There is no harm in assuming that $G$ is finitely generated \cite[Cor.~3.1.1]{KopytovMedvedev-ROGrps}, and hence countable. Then \cref{RestrictAmenIsRecurLIO} (with $H = G$) tells us that $G$ has a locally invariant order that is recurrent. So \fullcref{DescribeRecurrent}{LO} tells us that $G$ is left-orderable.

Now \cref{LOAmen,RecurrentLO->LocIndic} tell us that $G$ is locally indicable.
\end{proof}

\section{Proof of \cref{RESTONILPISBI}} \label{RESTONILPISBIPfSect}

\begin{notation}
Let $x$ and~$h$ be elements of a group~$H$.
	\begin{itemize}
	\item We use $x^h$ to denote the {conjugate} $h^{-1} x h$.
	\item We use $[x,h]$ to denote the {commutator} $x^{-1} h^{-1}x h = x^{-1} x^h$.
	\end{itemize}
\end{notation}

\begin{lem} \label{NilpRecur->Bi}
If\/ $\prec$ is a recurrent left-invariant total order on a locally nilpotent group~$H$, then\/ $\prec$ is bi-invariant.
\end{lem}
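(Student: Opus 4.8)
The plan is to reduce to the case where $H$ is finitely generated nilpotent, and then argue by downward induction along the lower central series, feeding in recurrence at exactly one point.

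Since bi-invariance of $\prec$ asserts something about only three elements at a time, it suffices to show that the restriction of $\prec$ to every finitely generated subgroup $K$ of~$H$ is bi-invariant. Such a $K$ is nilpotent (because $H$ is locally nilpotent), and the restriction of $\prec$ to $K$ is again a recurrent left-invariant total order: for $h \in K$ and a finite set $F \subseteq K$, the elements $ah^{-n}$ and $bh^{-n}$ lie in $K$ for all $a,b \in F$, so recurrence of the restriction at~$h$ and~$F$ is literally the same statement as recurrence of~$\prec$ at~$h$ and~$F$. Thus I may assume $H$ is finitely generated nilpotent, of class $c$ say, with lower central series $H = \gamma_1 \supseteq \gamma_2 \supseteq \cdots \supseteq \gamma_{c+1} = \{e\}$.

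Recall that $\prec$ is bi-invariant exactly when its positive cone $P = \{\, x \in H \mid x \succ e \,\}$ is invariant under conjugation. I would prove, by downward induction on~$i$, the assertion $(\star_i)$: for every $u \in \gamma_i$ with $u \succ e$ and every $g \in H$, we have $u^g \succ e$. Here $(\star_{c+1})$ is vacuous, and $(\star_1)$ is the desired conclusion. For the inductive step, assume $(\star_{i+1})$. It follows that conjugation by any $g \in H$ preserves $\prec$ on $\gamma_{i+1}$: if $a \prec b$ with $a,b \in \gamma_{i+1}$, then $a^{-1}b \succ e$ lies in $\gamma_{i+1}$, so $(a^g)^{-1} b^g = (a^{-1}b)^g \succ e$, i.e.\ $a^g \prec b^g$. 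Now fix $u \in \gamma_i$ with $u \succ e$ and $g \in H$, and put $z = [u,g] \in \gamma_{i+1}$, so that $u^g = uz$. If $z \succeq e$, then $u^g = uz \succeq u \succ e$ and we are done. So assume $z \prec e$. A standard commutator identity gives $[u, g^{n+1}] = [u, g^n]\, z^{g^n}$; since $z^{g^n} \in \gamma_{i+1}$ is a conjugate of $z \prec e$ and conjugation preserves $\prec$ on $\gamma_{i+1}$, we have $z^{g^n} \prec e$, and hence by induction the sequence $\bigl( [u,g^n] \bigr)_{n \geq 1}$ is strictly decreasing with $[u,g^1] = z$. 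Therefore $[u,g^n] \preceq z = [u,g]$, and so $u^{g^n} = u\,[u,g^n] \preceq u\,[u,g] = u^g$, for every $n \geq 1$. On the other hand, since $\prec$ is recurrent, applying the recurrence condition for $(e,g^{-1}) \in H \times H$ to the finite set $\{e,u\}$ yields some $n_0 \in \Z^+$ with $e \prec u \iff e \prec u^{g^{n_0}}$; as $u \succ e$, this gives $u^{g^{n_0}} \succ e$, whence $u^g \succeq u^{g^{n_0}} \succ e$. This completes the induction and the proof.

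The main obstacle is precisely the case $z = [u,g] \prec e$ in the inductive step: when $z \succeq e$ the conclusion $u^g = uz \succeq u$ is immediate, so the entire content is to rule out that a positive element turns negative under conjugation, and this is where recurrence must be used. Nilpotency is what makes recurrence bite: in the problematic case it forces the orbit $u, u^g, u^{g^2}, \dots$ to be monotone, so the single ``return near~$u$'' guaranteed by recurrence already pins $u^g$ above~$e$. Everything else — the reduction to finitely generated nilpotent groups, the reformulation in terms of the positive cone, and the commutator bookkeeping — is routine.
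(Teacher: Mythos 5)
Your proof is correct and follows essentially the same route as the paper's: reduce to a finitely generated nilpotent group, induct along a central series, and use the telescoping commutator identity $u^{g^{n}} = u\,[u,g]\,[u,g]^{g}\cdots[u,g]^{g^{n-1}}$ together with recurrence (applied to the pair $(e,g^{-1})$ and the set $\{e,u\}$) to force $u^{g^{n_0}} \succ e$ for some $n_0 \in \Z^+$. The only cosmetic differences are that the paper argues by contradiction using an arbitrary central series, while you argue directly along the lower central series with a case split on the sign of $[u,g]$.
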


\begin{proof}
Let $P = \{\, x \in H \mid x \succ e \,\}$ be the positive cone of~$\prec$. We wish to show $P$ is invariant under conjugation by elements of~$H$.

Arguing by contradiction, let us assume there exist $x,h \in H$, such that
	$$ \text{$x \succ e$ \ and \ $x^h \prec e$.} $$
Since $\{x,h\}$ is finite, there is no harm in assuming $H$ is finitely generated. Hence, $H$ is nilpotent, so there is a central series
	$$H = H_r \triangleright H_{r-1} \triangleright \cdots \triangleright H_1 \triangleright H_0 = \{e\} ,$$
such that $[H_k, H] \subset H_{k-1}$ for every~$k$.

Fix $k$, such that $x \in H_k$, and assume,  by induction, that $P \cap H_{k-1}$ is invariant under conjugation by elements of~$H$. Since $x \succ e$, but $x [x,h] = x^h \prec e$, we must have $[x,h] \prec e$. Then, since
	$ [x,h] \in [H_k, H] \subset H_{k-1}$,
our induction hypothesis tells us that 
	$$ \text{$[x,h]^{h^i} \prec e$ for every $i \in \Z$.} $$
Therefore, for every $n \in \Z^+$, we have
	\begin{align*}
	 x^{h^n} 
	 &= x^h \, [x^h,h] \,  [x^{h^2},h] \, \cdots \, [x^{h^{n-1}},h]
	 \\&= x^h \, [x,h]^h \,  [x,h]^{h^2} \cdots \,  [x,h]^{h^{n-1}}
	 \\&\prec e
	 . \end{align*}
Since $x \succ e$, this contradicts the fact that $\prec$ is recurrent.
\end{proof}

\begin{proof}[{\bf Proof of \cref{RESTONILPISBI}}]
Assume, for the moment, that $H$ is countable. Then, since $H$ is amenable (see \cref{LocNilpIsAmen}), \Cref{RestrictAmenIsRecur} provides us with a left-invariant total order~$\ll$ on~$G$, such that the restriction of~$\ll$ to~$H$ is recurrent. \cref{NilpRecur->Bi} tells us that the restriction to~$H$ must be bi-invariant, as desired.

Now consider the general case. % (That is, we no longer assume $H$ is countable.) 
	\begin{itemize}
	\item Let $\LO(G)$ be the set of left-invariant total orders on~$G$.
	\item For each subgroup~$K$ of~$H$, let
	$$ B_G(K) = \{\, {\prec} \in \LO(G) \mid \text{the restriction of~$\prec$ to~$K$ is bi-invariant} \,\} .$$
	\item Let $\mathcal{C}$ be the collection of countable subgroups of~$H$. 
	\end{itemize}
For $K_1,\ldots,K_n \in \mathcal{C}$, the subgroup $\langle K_1,\ldots,K_n \rangle$ is countable, so the first paragraph of the proof implies that 
	$$B_G(K_1) \cap \cdots \cap B_G(K_n) \  \supset  \ B_G \bigl( \langle K_1,\ldots,K_n \rangle \bigr) \ \neq \ \emptyset  .$$
Since each $B_G(K)$ is easily seen to be a closed subset of $\LO(G)$, and $\LO(G)$ is compact, we conclude that $\bigcap_{K \in \mathcal{C}} B_G(K) \neq \emptyset$. Since every finite subset of~$H$ is contained in an element of~$K$, we know that any element of this intersection is a left-invariant total order on~$G$ whose restriction to~$H$ is bi-invariant, as desired.
\end{proof}

\begin{rems} \label{Generalize} \ 
\noprelistbreak
	\begin{enumerate} \itemsep=\smallskipamount
	\item \label{Generalize-posicyclic}
The bi-invariance of all recurrent orders holds for a more general class of amenable groups than just those that are locally nilpotent. For example, let us say that~$G$ is \emph{positively polycyclic} if $G$ is a polycyclic group that is isomorphic to a group of upper-triangular $n \times n$ real matrices with all diagonal entries positive (for some~$n$). Generalizing \cref{NilpRecur->Bi}, it can be shown that if $G$ is a locally positively polycyclic group, then every recurrent left-invariant total order on~$G$ is bi-invariant. Therefore, \cref{RESTONILPISBI} remains valid if the word ``nilpotent'' is replaced with ``positively polycyclic''.
	\item \label{Generalize-NotForResid}
On the other hand, the word ``locally'' in \cref{RESTONILPISBI} cannot be replaced with the phrase ``residually torsion-free,'' even if we add the additional assumption that $H$ has finite index in~$G$. 
For example, a braid group on $5$ or more strands has no left-order whose restriction to a subgroup of finite index is bi-invariant \cite{DubrovinaDubrovin-BraidGrps}, % exact reference???
\cite[Thm.~3.2]{RhemtullaRolfsen-LocIndicBraids}, even though the subgroup of pure braids is a subgroup of finite index that is residually torsion-free nilpotent.
	\end{enumerate}
\end{rems}

\section{Diffuse groups and weakly diffuse groups} \label{DiffuseSect}

\begin{defn}[\cite{Bowditch-VarUPP}]
Let $G$ be a group.
	\begin{enumerate}
	\item An element~$\widehat a$ of a subset~$A$ of~$G$ is an \emph{extreme point} of~$A$ if, for all nonidentity $h \in G$, we have either $\widehat a \, h \notin A$ or $\widehat a \, h^{-1} \notin A$. Equivalently, we have $\widehat a^{-1} A \cap A^{-1} \widehat a = \{e\}$, where $A^{-1} = \{\, a^{-1} \mid a \in A \,\}$.
	\item $G$ is \emph{weakly diffuse} if every nonempty, finite subset of~$G$ has an extreme point.
	\item $G$ is \emph{diffuse} if every finite subset~$A$ of~$G$ with $\#A \ge 2$ has at least two extreme points.
	\end{enumerate}
\end{defn}

Answering questions of B.\,Bowditch \cite[p.~815]{Bowditch-VarUPP} and I.\,Chiswell \cite[p.~1163]{Chiswell-LIO}, we observe that the above two properties of~$G$ are equivalent to the existence of a locally invariant order relation:

\begin{prop} \label{DiffuseIff}
For any group~$G$, the following are equivalent:
	\begin{enumerate}
	\item \label{DiffuseIff-diffuse}
	$G$ is diffuse.
	\item \label{DiffuseIff-weak}
	$G$ is weakly diffuse.
	\item \label{DiffuseIff-total}
	$G$ has a locally invariant total order.
	\item \label{DiffuseIff-partial}
	$G$ has a locally invariant partial order.
	\end{enumerate}
\end{prop}

\begin{proof}
We prove $(\ref{DiffuseIff-diffuse}) \Leftrightarrow (\ref{DiffuseIff-weak})$ and
$(\ref{DiffuseIff-weak}) \Rightarrow (\ref{DiffuseIff-total}) \Rightarrow (\ref{DiffuseIff-partial}) \Rightarrow (\ref{DiffuseIff-weak})$.
%(None of the implications are difficult.)
Begin by noting that $(\ref{DiffuseIff-diffuse}) \Rightarrow (\ref{DiffuseIff-weak})$ and
$(\ref{DiffuseIff-total}) \Rightarrow (\ref{DiffuseIff-partial})$ are trivial. Also, $(\ref{DiffuseIff-partial}) \Rightarrow (\ref{DiffuseIff-weak})$ is well known (and not difficult) \cite[Lem.~1.2(2)]{Chiswell-LIO}.

$(\ref{DiffuseIff-weak} \Rightarrow \ref{DiffuseIff-diffuse})$
Suppose $G$ is weakly diffuse, but not diffuse. Then there is a finite subset~$A$ of~$G$ with $\#A \ge 2$, such that the extreme point of~$A$ is unique. After multiplying $A$ on the left by an element of~$G$, we may assume that the extreme point of~$A$ is~$e$. Then $e$ is also the unique extreme point of~$A^{-1}$. (In general, since 
$\widehat a^{-1} A \cap A^{-1} \widehat a = \widehat a^{-1} \bigl(
\widehat a A^{-1} \cap A \widehat a^{-1} \bigr) \widehat a$,
we see that $\widehat a$ is an extreme point of~$A$ if and only
$\widehat a^{-1}$ is an extreme point of~$A^{-1}$.) So the only
possible extreme point of $A \cup A^{-1}$ is~$e$. However, if we let
$h$ be any nonidentity element of~$A$, then we have $eh^{\pm1} =
h^{\pm1} \in A \cup A^{-1}$, so $e$ is not an extreme point. Therefore $A \cup A^{-1}$ has no extreme points, which contradicts the fact that $G$ is weakly diffuse.

$(\ref{DiffuseIff-weak} \Rightarrow \ref{DiffuseIff-total})$
Assume $G$ is weakly diffuse. We wish to show that $G$ has a locally invariant total order.
By a straightforward compactness argument, it suffices to show that every finite subset~$A$ of~$G$ has a total order $<_A$ with the following property:
	\begin{align} \label{finite}
	\begin{matrix}
	\text{for all $a \in A$ and all nonidentity $h \in G$, 
		such that $ah \in A$ and $ah^{-1} \in A$,} \\
	 \text{we have either $a <_A ah$ or $a <_A ah^{-1}$} 
	 . \end{matrix}
	\end{align}
We construct~$<_A$ by induction on the cardinality of~$A$.  Since $G$ is weakly diffuse, there exists an extreme point $\widehat a$ of~$A$. (Note that the condition in~(\ref{finite}) is vacuously true for $a = \widehat a$.) By the induction hypothesis, there is a total order on $A \smallsetminus \{\widehat a\}$ that satisfies (\ref{finite}) when $A$ is replaced with $A \smallsetminus \{\widehat a\}$. We extend this to a total order on~$A$ by specifying that $\widehat a$ is the unique maximal element. Then the resulting order satisfies~(\ref{finite}).
\end{proof}

Thus, \cref{AMENLIO} can be restated in the following form:

\begin{thm}
An amenable group is weakly diffuse if and only if it is locally indicable.
\end{thm}

\end{document}